\renewenvironment{proof}[1][\proofname] {\par\pushQED{\qed}\normalfont\topsep6\p@\@plus6\p@\relax\trivlist\item[\hskip\labelsep\bfseries#1\@addpunct{.}]\ignorespaces}{\popQED\endtrivlist\@endpefalse}
\tikzset{
  vertex/.style = {circle, draw, fill=white, inner sep=0pt, minimum width=4pt},
  root/.style = {circle, draw, fill=black, inner sep=0pt, minimum width=4pt},
  bracket/.style = {decorate,decoration={brace,amplitude=5pt},xshift=0pt,yshift=-10pt},
  bracket-r/.style = {decorate,decoration={brace,amplitude=5pt},xshift=10pt,yshift=0pt},
  bracket-u/.style = {decorate,decoration={brace,amplitude=5pt},xshift=0pt,yshift=10pt},
}
\newtheorem{theorem}{\bf Theorem}[section]
\newtheorem{lemma}[theorem]{\bf Lemma}
\newtheorem{conjecture}[theorem]{\bf Conjecture}
\theoremstyle{definition}
\newtheorem{definition}[theorem]{\bf Definition}
\def\eps{\varepsilon}
\def\cG{\mathcal{G}}
\def\cH{\mathcal{H}}
\def\ex{\mathrm{ex}}
\begin{document}

\begin{frontmatter}[classification=text]


\author[conlon]{David Conlon\thanks{Supported by NSF Award DMS-2054452.}}
\author[janzer]{Oliver Janzer\thanks{Supported by a research fellowship at Trinity College.}}

\begin{abstract}
A longstanding conjecture of Erd\H{o}s and Simonovits states that for every rational $r$ between $1$ and $2$ there is a graph $H$ such that the largest number of edges in an $H$-free graph on $n$ vertices is $\Theta(n^r)$. Answering a question raised by Jiang, Jiang and Ma, we show that the conjecture holds for all rationals of the form $2 - a/b$ with $b$ sufficiently large in terms of $a$.
\end{abstract}
\end{frontmatter}

\section{Introduction}

Given a positive integer $n$ and a graph $H$, the {\it extremal number} $\ex(n,H)$ is the largest number of edges in an $H$-free graph on $n$ vertices. In this short paper, we will be concerned with one of the standard conjectures about extremal numbers, the rational exponents conjecture of Erd\H{o}s and Simonovits (see, for example,~\cite{E81}), which states that every rational number $r$ between 1 and 2 is {\it realisable} in the sense that there exists a graph $H$ such that $\ex(n,H)=\Theta(n^r)$.

\begin{conjecture}[Rational exponents conjecture] \label{con:rational}
	For every rational number $r\in [1,2]$, there exists a graph $H$ with $\ex(n,H)=\Theta(n^r)$.
\end{conjecture}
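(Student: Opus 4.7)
The plan is to prove, for every rational $r = p/q \in [1,2]$, the existence of a graph $H = H_{p,q}$ with $\ex(n,H) = \Theta(n^r)$ by combining a flexible algebraic lower bound with a tailored combinatorial upper bound. My starting point is the \emph{balanced rooted tree} framework: associate to the target exponent $r$ a rooted tree $T$ whose density invariant
\[
    \rho(T) \;=\; \frac{e(T)}{v(T) - |\mathrm{roots}(T)|}
\]
is chosen so that $2 - 1/\rho(T) = r$, and then form an auxiliary graph $H = H(T)$ by gluing many copies of $T$ along their roots inside a suitable incidence structure, in the style of Bukh--Conlon and Jiang--Jiang--Ma. As a subsidiary step I would check that every rational in the required range can be realised as $\rho(T)$ for some balanced rooted tree $T$, which follows from an elementary gluing argument on small blocks.

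For the lower bound $\ex(n,H) = \Omega(n^r)$, I would use the random-algebraic method: take a uniformly random low-degree polynomial map $f \colon \mathbb{F}_q^t \times \mathbb{F}_q^t \to \mathbb{F}_q^s$, form the bipartite graph whose edges are $\{(x,y) : f(x,y) = 0\}$, and use the balancedness of $T$ to bound the expected number of $H$-copies, deleting one edge per copy. The key computation is that $\rho(T)$ governs exactly the exponent at which $H$-copies proliferate. For the matching upper bound $\ex(n,H) = O(n^r)$, I would run a supersaturation plus dependent-random-choice argument to produce many ``rooted'' homomorphic copies of $T$ in any sufficiently dense host graph, then extract a genuine $H$-copy via a counting argument that again leverages balancedness. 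Different regimes of $r$ will need different trees: for $r$ near $1$ one uses generalised theta graphs and their relatives; for $r$ near $2$ one uses the forest-style constructions of the present paper; for intermediate $r$ one must interpolate via more elaborate gluings of small blocks of prescribed density.

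The main obstacle, and the reason this conjecture has resisted proof for forty years, is that no single family of rooted trees $T$ is known for which the combinatorial upper bound matches the algebraic lower bound at every rational $r$. The present paper, for instance, only reaches $r = 2 - a/b$ with $b \gg a$, and other methods cover only sparse or structurally constrained subsets of the rationals. Closing the remaining gaps would require either a genuinely new construction of $H$ whose forbidden-subgraph density can be read off from a flexible combinatorial invariant realising every rational, or a substantially sharper upper-bound technique that pairs with the random-algebraic lower bound for a much wider class of balanced graphs. This is where any honest full proof must find its decisive new idea, and it is precisely the step for which I cannot at present offer a concrete plan.
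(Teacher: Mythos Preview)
Your proposal is not a proof, and you say so yourself: the final paragraph explicitly concedes that the decisive step---pairing the algebraic lower bound with a matching combinatorial upper bound at \emph{every} rational $r$---is one ``for which I cannot at present offer a concrete plan.'' That is an honest and accurate assessment, but it means there is nothing here that qualifies as a proof to evaluate.

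Correspondingly, there is no ``paper's own proof'' to compare against. The statement you were asked about is Conjecture~\ref{con:rational}, and the paper states it as an open conjecture, not a theorem. The paper's actual contribution is the partial result Theorem~\ref{thm:main}, which verifies the conjecture only for rationals of the form $2 - a/b$ with $b \geq \max(a,(a-1)^2)$; the full conjecture remains open.

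Your sketch of the surrounding machinery---balanced rooted trees with a prescribed density $\rho$, the Bukh--Conlon random-algebraic lower bound of Lemma~\ref{lem:BC}, and supersaturation or dependent-random-choice arguments for upper bounds---is a reasonable summary of the framework within which all recent partial progress, including this paper's, has taken place. (One small inaccuracy: the graph $H$ realising the exponent is simply the $t$-blowup $T^t$ of the rooted tree, not something built ``inside a suitable incidence structure''; the incidence/algebraic structure lives in the \emph{host} graph used for the lower bound, not in $H$ itself.) But a correct description of known machinery together with an explicit acknowledgement that the missing ingredient is still missing does not constitute a proof proposal.
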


The main result towards this conjecture is arguably the result of Bukh and Conlon~\cite{BC18} saying that for any rational number $r\in [1,2]$ there exists a finite family $\mathcal{H}$ of graphs such that $\ex(n,\mathcal{H})=\Theta(n^r)$, where $\ex(n,\mathcal{H})$ denotes the largest number of edges in an $n$-vertex graph which does not contain any $H\in \mathcal{H}$ as a subgraph. However, the conjecture remains open in its original form, which asks for a single graph rather than a family.

Nevertheless, following the breakthrough in~\cite{BC18}, progress on the single graph case has been swift, with substantial contributions, each extending the range of exponents for which the conjecture is known, made by Jiang, Ma and Yepremyan~\cite{JMY18}, Kang, Kim and Liu \cite{KKL18}, Conlon, Janzer and Lee~\cite{CJL21}, Janzer~\cite{Jan20}, Jiang and Qiu~\cite{JQ20,JQ19} and, most recently, Jiang, Jiang and Ma~\cite{JJM20}. For now, we highlight only one of these results, due to Jiang and Qiu~\cite{JQ19} saying that any rational of the form $1 + p/q$ with $q > p^2$ is realisable. 
Proving a conjecture of Jiang, Jiang and Ma~\cite[Conjecture 11]{JJM20} in a strong form, we show that a similar phenomenon holds near two.

\begin{theorem} \label{thm:main}
All rationals of the form $r = 2 - a/b$ with $b \geq \max(a, (a-1)^2)$ are realisable.
\end{theorem}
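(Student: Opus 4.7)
The plan is to exhibit, for each admissible pair $(a,b)$, a single graph $H_{a,b}$ and prove matching bounds $\ex(n, H_{a,b}) = \Theta(n^{2-a/b})$, following the template pioneered by Bukh and Conlon. I would take $H_{a,b}$ to have the form $T^{(p)}$, the graph obtained by taking $p$ copies of a rooted tree $(T,R)$ and identifying them along the root set $R$, where $p = p(a,b)$ is a large constant and $T$ is chosen with $|V(T)\setminus R| = a$ and $|E(T)| = b$. Then $T$ has rooted density $\rho(T) := |E(T)|/|V(T)\setminus R| = b/a = 1/(2-r)$, precisely the value needed to target the exponent $r = 2 - a/b$. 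A concrete candidate is to arrange the $a$ non-root vertices in a path and distribute the $b-a+1$ root-leaves as evenly as possible among them.

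The lower bound $\ex(n,H_{a,b}) = \Omega(n^{2-a/b})$ follows from the random algebraic construction of Bukh and Conlon, which produces, for every rational $r \in (1,2)$, an $n$-vertex graph of the required density avoiding every rooted tree whose density strictly exceeds $1/(2-r)$. A short additional calculation inside the same algebraic framework shows that these graphs also avoid the book $T^{(p)}$ once $p$ is large enough, since forcing $p$ copies of $T$ onto a common root set would, by averaging, create a rooted substructure of density strictly exceeding $b/a$.

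The main work is the upper bound. Suppose for contradiction that $G$ is an $n$-vertex $H_{a,b}$-free graph with many more than $n^{2-a/b}$ edges. A convexity-based supersaturation argument, standard in this area, then shows that $G$ contains $\Omega(n^{|V(T)|} (e(G)/n^2)^{|E(T)|})$ rooted homomorphic copies of $T$, and a cleaning step upgrades these to embeddings whose extension counts from each root set are tightly concentrated. A pigeonhole on root sets produces $p$ copies of $T$ sharing a common $R$; the remaining task is to show that these copies can be chosen with pairwise disjoint interiors so as to form a genuine copy of $H_{a,b}$. Any unintended overlap manifests as a rooted subtree of density strictly greater than $b/a$, and such a subtree must be excluded by the same counting argument provided $T$ is chosen to be \emph{strictly balanced}, i.e. every proper rooted subtree of $T$ has density strictly less than $b/a$.

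The principal technical obstacle is exactly this strict-balance condition, together with the cleaning/pigeonhole step that rules out degenerate overlaps. The hypothesis $b \ge \max(a,(a-1)^2)$ should enter here as the arithmetic input that allows a strictly balanced tree $T$ of the prescribed shape to exist, playing a role analogous to the $q > p^2$ condition in the theorem of Jiang and Qiu for exponents of the form $1 + p/q$. Verifying strict balance for the distribution of leaves on the path—and tracking how large $p$ must be taken to make the averaging argument succeed—is where I would expect the bulk of the calculation to live.
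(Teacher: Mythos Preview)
Your lower-bound half is fine and matches the paper: Bukh--Conlon's random algebraic construction (Lemma~\ref{lem:BC}) gives $\ex(n,T^{(p)})=\Omega(n^{2-a/b})$ for any balanced rooted tree of density $b/a$ once $p$ is large. The gap is entirely in the upper bound.

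What you sketch for the upper bound---supersaturation to get many copies of $T$, pigeonhole onto a common root set, then argue that overlaps among the $p$ copies would produce a rooted substructure of density strictly exceeding $b/a$ and hence can be ``cleaned away''---is essentially a sketch of Conjecture~\ref{con:BC} itself, and that conjecture is open. The step ``any unintended overlap manifests as a rooted subtree of density strictly greater than $b/a$, and such a subtree must be excluded by the same counting argument'' is exactly where every generic attempt breaks down: an overlap between two copies of $T$ does not, in general, force a denser rooted subtree inside $G$, and even when it does, bounding the number of such denser configurations is a separate extremal problem that can be just as hard as the original one. Strict balance of $T$ is necessary for this kind of argument but far from sufficient, and the paper never claims otherwise.

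The paper does something much more specific. It fixes one particular family of rooted trees $F_{r,s}$ (a spider: a centre $y$ joined to $r$ vertices $z_i$, each $z_i$ joined to $s$ root-leaves) and proves the upper bound $\ex(n,F_{r,s}^t)=O(n^{2-(r+1)/(rs+r)})$ only under the constraint $r\ge s+2$. The argument is tailored to this shape: it classifies $(s+1)$-stars as heavy or light according to the codegree of their leaves, defines \emph{nice} copies of $F_{r,s}$ (those built from light stars), shows that in an almost-regular $H$-free graph many nice copies must share a common image of some $z_k$, and then extracts from this a very lopsided bipartite subgraph on a set $X$ defined as the locus of possible images of $y$. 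A final dependent-random-choice-type lemma finds $H$ inside that lopsided subgraph. The inequality $r\ge s+2$ is used precisely at the end, to make the exponent $(s^2+s+1)\bigl(1-\tfrac{r+1}{rs+r}\bigr)-s^2$ nonnegative; it is an analytic constraint on the final computation, not a balancedness condition on the tree.

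Finally, $F_{r,s}$ only hits the densities $\frac{rs+r}{r+1}$, which is a sparse set of rationals. To cover all $b\ge (a-1)^2$ the paper invokes an observation of Kang, Kim and Liu that allows one to shift the exponent from $2-\frac{a}{ap_0+q}$ to $2-\frac{a}{ap+q}$ for every $p\ge p_0$, and then cites earlier literature for the residues $b\equiv -1,0,1\pmod a$ and for $a\le 3$. Your proposal has no analogue of this reduction; the caterpillar you describe would have to realise every target density directly, and you have not shown how the hypothesis $b\ge (a-1)^2$ enters the upper-bound argument for it.
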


To say more, we must first explain the context in which the recent progress has been made. We will be interested in {\it rooted graphs} $(F, R)$ consisting of a graph $F$ together with a proper subset $R$ of the vertex set $V(F)$ that we refer to as the {\it roots}. We will usually just write $F$ if the roots are clear from context. For each $S \subseteq V(F) \setminus R$, let $\rho_F(S) := \frac{e_S}{|S|}$, where $e_S$ is the number of edges in $F$ incident with a vertex of $S$. 
The \emph{density} of $F$ is then $\rho(F) := \rho_F(V(F) \setminus R)$ and we say that $(F, R)$ is {\it balanced} if $\rho_F(S) \geq \rho(F)$ for all $S \subseteq V(F) \setminus R$. Finally, given a rooted graph $(F, R)$ and a positive integer $t$, the {\it $t$-blowup} $F^t$ is the graph obtained by taking $t$ vertex-disjoint copies of $F$ and identifying the different copies of $v$ for each $v \in R$. The following result of Bukh and Conlon~\cite{BC18} now yields a lower bound for the extremal number of $F^t$ provided $F$ is balanced and $t$ is sufficiently large in terms of $F$.

\begin{lemma}[Bukh--Conlon] \label{lem:BC}
For every balanced rooted graph $F$ with density $\rho$, there exists a positive integer $t_0$ such that $\ex(n, F^t)=\Omega(n^{2-\frac{1}{\rho}})$ for all integers $t \geq t_0$.
\end{lemma}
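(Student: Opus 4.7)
The plan is to construct, for each admissible pair $(a,b)$ with $b \ge \max(a, (a-1)^2)$, a balanced rooted graph $F = F_{a,b}$ of density exactly $\rho = b/a$, and then prove $\ex(n, F^t) = \Theta(n^{2-a/b})$ for all sufficiently large $t$. Once $F$ is built and checked to be balanced, the lower bound is immediate from Lemma~\ref{lem:BC}, which gives $\ex(n, F^t) = \Omega(n^{2-a/b})$ for $t$ large enough.

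For the construction, I would take $F_{a,b}$ to have exactly $a$ non-root vertices arranged with some mild internal structure (for instance, as a tree on $a-1$ edges, or as an independent set if $a \mid b$), and attach the remaining $b-(a-1)$ incidences as edges to roots, distributing them so that every non-root has root-degree at least roughly $b/a$. The hypothesis $b \ge \max(a,(a-1)^2)$ is exactly what makes such a balanced distribution feasible: the tight case of the balance inequality $e_S/|S| \ge b/a$ is for very small subsets $S \subseteq V(F)\setminus R$, and the quadratic bound $b \ge (a-1)^2$ translates into each non-root having enough root-neighbours that even the worst such $S$ clears the density threshold $b/a$. The linear bound $b \ge a$ is needed merely to keep the density at least $1$, so that $F$ carries at least one edge incident to each non-root.

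The main work goes into the matching upper bound $\ex(n, F^t) = O(n^{2-a/b})$. The strategy is to take an $n$-vertex host graph $G$ with at least $C n^{2-a/b}$ edges, pass to a nearly regular subgraph of large minimum degree, and count homomorphisms from $F$ into this subgraph that are rooted at a random injection $\phi \colon R \hookrightarrow V(G)$. Using the balanced property of $F$ through a layered H\"older / dependent random choice argument, one shows that the average number of extensions of $\phi$ to a full copy of $F$ is large, and then a supersaturation step converts this into many $\phi$'s admitting at least $t$ extensions sharing the same root image, which is exactly a copy of $F^t$. The main obstacle is executing this final step when the density $b/a$ is not an integer: standard $K_{s,t}$-type counts do not suffice and one must iterate a weighted supersaturation argument that simultaneously exploits the density inequality $\rho_F(S) \ge b/a$ for \emph{every} subset $S$ of non-roots, in the spirit of the techniques developed in \cite{BC18,Jan20,JQ19,JJM20}. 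The constraint $b \ge (a-1)^2$ is expected to re-enter at this stage, ensuring that the codegree estimates driving the dependent random choice step are strong enough to close the argument and yield $t$ vertex-disjoint extensions, completing the proof.
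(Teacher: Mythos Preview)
Your proposal is not a proof of Lemma~\ref{lem:BC} at all. That lemma is a result quoted from Bukh and Conlon~\cite{BC18}; the paper does not reprove it, it merely cites it. A proof of Lemma~\ref{lem:BC} would be a random algebraic construction showing that for any balanced rooted $F$ there is a dense graph with no $F^t$; nothing in your write-up addresses this.

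What you have written is instead an outline for Theorem~\ref{thm:main}, and even on those terms it diverges substantially from the paper and contains a real gap. The paper does \emph{not} build a bespoke balanced rooted graph $F_{a,b}$ of density $b/a$ for each pair $(a,b)$. It fixes the explicit family of spider trees $F_{r,s}$, proves the single upper bound $\ex(n,F_{r,s}^t)=O(n^{2-(r+1)/(rs+r)})$ for $r\ge s+2$ (Theorem~\ref{thm:main2}), and then invokes the Kang--Kim--Liu observation to propagate this to all exponents $2-a/b$ with $b\ge (a-1)^2$ that are not already in the literature. The condition $b\ge (a-1)^2$ enters through this reduction, not through any balancedness check on an ad hoc $F_{a,b}$.

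The serious gap in your sketch is the upper bound. You say a ``layered H\"older / dependent random choice argument'' together with supersaturation will produce $t$ disjoint extensions, but this is precisely the step that is not routine and is the entire content of the paper. The paper's argument is specific to the structure of $F_{r,s}$: it defines \emph{nice} copies (those whose relevant $(s{+}1)$-stars are light), shows via a hypergraph Tur\'an bound that almost all copies are nice, then proves that many nice copies sharing a leaf set force some $z_k$ to have many neighbours in a locus set $S(\cdot)$ depending only on the other leaves (Lemma~\ref{lem:rich with fixed leaves}). Averaging over partial leaf sets produces a dense unbalanced bipartite subgraph to which Lemma~\ref{lem:dependent random choice} applies. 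None of this is a generic ``weighted supersaturation'' and none of it is captured by your outline; in particular, your claim that $b\ge (a-1)^2$ ``re-enters'' to close a codegree estimate is not how the argument actually runs.
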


Paired to this result is the following conjecture, saying that Lemma~\ref{lem:BC} is tight up to the constant for balanced rooted {\it trees}. If true, this conjecture would easily imply Conjecture~\ref{con:rational}. 

\begin{conjecture}[Bukh--Conlon] \label{con:BC}
For every balanced rooted tree $F$ with density $\rho$ and all positive integers~$t$, $\ex(n, F^t) = O(n^{2-\frac{1}{\rho}})$.
\end{conjecture}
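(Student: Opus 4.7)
The plan is to follow the by-now standard strategy of rooted counting plus supersaturation, pioneered by Bukh--Conlon and refined in subsequent works on realisable exponents. Set $s:=|V(F)\setminus R|$, fix $t$, and suppose $G$ is an $n$-vertex graph with at least $Cn^{2-1/\rho}$ edges for a large constant $C=C(F,t)$. After passing to a bipartite subgraph and iteratively deleting vertices of degree less than $e(G)/(2n)$, we may assume $G$ has minimum degree $d=\Omega(n^{1-1/\rho})$. The aim is to embed $F^t$ into such a $G$.

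First, one counts rooted copies of $F$. Since $F$ is a tree, the non-root vertices admit a leaf-peeling order $v_1,\ldots,v_s$ in which each $v_i$ has a unique neighbour among $R\cup\{v_1,\ldots,v_{i-1}\}$. Building extensions one vertex at a time and using the minimum degree bound, a Sidorenko-type convexity argument yields $\sum_\phi N(\phi)\geq C^{e(F)}n^{|R|}$, where $N(\phi)$ denotes the number of extensions of $\phi:R\to V(G)$ to a homomorphism $F\to G$. A Markov-type pigeonhole then produces a root map $\phi$ with $N(\phi)\geq t^{2s}$, provided $C$ is chosen large enough in terms of $t$.

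Next, for such a root map $\phi$, let $\mathcal{H}(\phi)$ be the $s$-uniform hypergraph on $V(G)$ whose edges are the non-root images of the $F$-copies rooted at $\phi$. A copy of $F^t$ with root map $\phi$ is exactly a matching of size $t$ in $\mathcal{H}(\phi)$. One seeks a \emph{balanced supersaturation} statement: for every proper non-empty $S\subsetneq V(F)\setminus R$ and every injection $\psi:S\to V(G)$, the number of $F$-copies rooted at $\phi$ whose restriction to $S$ agrees with $\psi$ is polynomially smaller than the naive average $N(\phi)/n^{|S|}$. The hypothesis $\rho_F(S)\geq\rho$ is precisely what makes such a bound plausible, and the resulting multiplicity control feeds into a standard greedy/deletion argument that extracts $t$ pairwise vertex-disjoint edges from $\mathcal{H}(\phi)$, i.e.\ a copy of $F^t$.

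The main obstacle lies in exactly this last step. Because the balanced hypothesis is $\rho_F(S)\geq\rho$ rather than a strict inequality, the desired multiplicity bound can fail on the ``tight'' subsets $S$ achieving equality. A full proof of Conjecture~\ref{con:BC} would require either a finer structural description of the tight subsystems of an arbitrary balanced rooted tree, or a more robust supersaturation lemma capable of absorbing such configurations. Existing works handle restricted regimes by exploiting specific shapes for the tight subsystems -- e.g.\ many leaves hanging off a single root, or sparse branching -- and the hypothesis $b\geq\max(a,(a-1)^2)$ in Theorem~\ref{thm:main} is tailored to one such regime near the exponent $2$. In full generality, Conjecture~\ref{con:BC} remains open.
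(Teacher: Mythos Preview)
The statement is a \emph{conjecture}, and the paper does not prove it. Conjecture~\ref{con:BC} is stated only as context; the paper then establishes the special case $F=F_{r,s}$ with $r\geq s+2$ (Theorem~\ref{thm:main2}) by an argument tailored to that particular family. There is no ``paper's own proof'' of the full conjecture to compare against.

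Your proposal is not a proof either, and to your credit you say so in the final paragraph: what you have written is a heuristic outline of the generic rooted-counting-plus-supersaturation strategy, ending with the (correct) observation that the conjecture remains open. The obstacle you identify --- that the balanced hypothesis gives only $\rho_F(S)\geq\rho$, so the tight subsets with equality defeat the naive multiplicity bound needed to extract a matching of size $t$ in $\mathcal{H}(\phi)$ --- is real and is exactly why no general proof is known. It is worth noting, for comparison, that the paper's argument for $F_{r,s}$ does \emph{not} follow your generic template: rather than bounding multiplicities over arbitrary subsets $S$ of the non-roots, it exploits the specific two-level shape of $F_{r,s}$ through the machinery of heavy $(s+1)$-stars, nice copies, and $(c,k)$-rich copies, and ultimately reduces the problem to a dependent-random-choice-type lemma (Lemma~\ref{lem:dependent random choice}) applied to an auxiliary bipartite subgraph $G[X,Y]$ built from the locus set of Definition~\ref{def:locus}. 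That argument has no obvious extension to an arbitrary balanced rooted tree, which is consistent with your closing sentence.
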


\begin{figure}
  \centering
  \begin{tikzpicture}[thick, scale=0.45, baseline=(v.base)]
    \coordinate (v) at (0,-0.8);
    \draw (-3.6,-1.1) -- (-4.2,-2.6) node[root]{};
    \draw (-3.6,-1.1) -- (-3.6,-2.6) node[root]{};
    \draw (-3.6,-1.1) -- (-3,-2.6) node[root]{};
    \draw [bracket] (-3,-2.6) -- (-4.2,-2.6) node[black,midway,yshift=-10pt]{\footnotesize $s$};
    \draw (-0.9,.4) -- (-3.6,-1.1) node[vertex]{};
    \draw (-1.8,-1.1) -- (-2.4,-2.6) node[root]{};
    \draw (-1.8,-1.1) -- (-1.8,-2.6) node[root]{};
    \draw (-1.8,-1.1) -- (-1.2,-2.6) node[root]{};
    \draw [bracket] (-1.2,-2.6) -- (-2.4,-2.6) node[black,midway,yshift=-10pt]{\footnotesize $s$};
    \draw (-0.9,.4) -- (-1.8,-1.1) node[vertex]{};
    \draw (0,-1.1) -- (-0.6,-2.6) node[root]{};
    \draw (0,-1.1) -- (0,-2.6) node[root]{};
    \draw (0,-1.1) -- (0.6,-2.6) node[root]{};
    \draw [bracket] (0.6,-2.6) -- (-0.6,-2.6) node[black,midway,yshift=-10pt]{\footnotesize $s$};
    \draw (-0.9,.4) -- (0,-1.1) node[vertex]{};
    \draw (1.8,-1.1) -- (1.2,-2.6) node[root]{};
    \draw (1.8,-1.1) -- (1.8,-2.6) node[root]{};
    \draw (1.8,-1.1) -- (2.4,-2.6) node[root]{};
    \draw [bracket] (2.4,-2.6) -- (1.2,-2.6) node[black,midway,yshift=-10pt]{\footnotesize $s$};
    \draw (-0.9,.4) node[vertex]{} -- (1.8,-1.1) node[vertex]{};
    \draw [bracket] (1.8,-3.8) -- (-3.6,-3.8) node[black,midway,yshift=-10pt]{\footnotesize $r$};    
  \end{tikzpicture}
  \caption{The rooted graph $F_{r,s}$, with black vertices representing roots.} \label{fig:F} 
\end{figure}
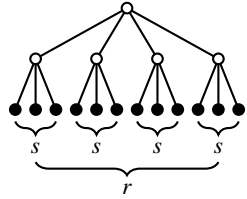

The recent progress then has centred on proving Conjecture~\ref{con:BC} for particular choices of the rooted tree $F$, with many novel and interesting ideas going into each new case. Here, we consider a family of rooted trees first studied in this setting by Jiang, Jiang and Ma~\cite{JJM20}. 
More precisely, for every pair of integers $(r,s)$ with $r, s \geq 1$, we write $F_{r, s}$ for the rooted graph with vertices $y$, $z_i$ for $1\leq i\leq r$ and $w_{i,j}$ for $1\leq i\leq r$, $1\leq j\leq s$, with the $w_{i,j}$ roots, and edges $y z_i$ for all $1\leq i\leq r$ and $z_i w_{i,j}$ for all $1\leq i\leq r,1\leq j\leq s$. For a picture with $r = 4$ and $s = 3$, we refer the reader to Figure~\ref{fig:F}, where the roots are drawn in black. It is easy to verify that $F_{r, s}$ is balanced provided $s \leq r$. Therefore, since $\rho(F_{r,s}) = (rs+r)/(r+1)$, Lemma~\ref{lem:BC} implies that 
$$\ex(n, F_{r,s}^t) = \Omega(n^{2- \frac{r+1}{rs+r}})$$ 
for $s \leq r$ and $t$ sufficiently large. Our main technical result is the corresponding upper bound for a certain range of parameters. 

\begin{theorem} \label{thm:main2}
	For any integers $r\geq s+2\geq 3$ and $t\geq 1$, $\ex(n,F_{r,s}^t)=O(n^{2-\frac{r+1}{rs+r}}).$
\end{theorem}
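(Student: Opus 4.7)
The plan is to follow the two-step strategy developed in \cite{BC18} and refined in \cite{JJM20}: first, establish a balanced supersaturation result for $F_{r,s}$, and then deduce the bound for the blowup $F_{r,s}^t$ by a standard random sampling and cleaning argument. One starts with a graph $G$ on $n$ vertices having $e(G)\geq Cn^{2-(r+1)/(r(s+1))}$ edges and, by a standard regularisation, assumes $G$ is bipartite between parts $A$ and $B$ of size $\Theta(n)$, with every vertex of $B$ having degree $\Theta(d)$, where $d=n^{(rs-1)/(r(s+1))}$. The apex $y$ and the roots $w_{i,j}$ will be placed in $A$, while the intermediate vertices $z_i$ lie in $B$. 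Building copies of $F_{r,s}$ greedily---choose $y\in A$, then $(z_1,\dots,z_r)\in N(y)^r$, and for each $i$ an $s$-tuple $(w_{i,1},\dots,w_{i,s})\in N(z_i)^s$---gives $\Omega(n\cdot d^{r(s+1)})=\Omega(n^{rs})$ copies in total, matching the count in an idealised random host.

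The heart of the argument is to produce a sub-collection $\mathcal{F}$ of these copies satisfying a balanced codegree condition: for every $\ell\leq rs$ and every labelled $\ell$-subset $S\subseteq A$ of potential roots, the number of copies in $\mathcal{F}$ having $S$ among their labelled roots should be at most $O(|\mathcal{F}|/n^\ell)$. A direct calculation shows that $F_{r,s}$ is \emph{strictly} balanced whenever $r>s$: for any proper non-empty subset $T=\{y,z_{i_1},\dots,z_{i_k}\}$ of the non-root vertices containing $y$ with $k<r$, one has $\rho_F(T)-\rho(F_{r,s})=(r-s)(r-k)/\bigl((k+1)(r+1)\bigr)>0$, while $\rho_F(T)=s+1>\rho(F_{r,s})$ for subsets $T\subseteq\{z_1,\dots,z_r\}$ not containing $y$. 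The stronger hypothesis $r\geq s+2$ provides a quantitative gap in these inequalities that enables a dichotomy: for each small subset $S$ of vertices of $G$, either the relevant codegree in $G$ is close to its random-graph value (in which case the greedy count already delivers the right codegree bound for copies through $S$), or the codegree is anomalously large (in which case $G$ contains a dense bipartite subgraph of $K_{s+1,t'}$-type for a large $t'$, from which a copy of $F_{r,s}^t$ can be located directly). The technically involved point is running this dichotomy simultaneously for all $\ell$-subsets $S$, including those distributed very unevenly across the $r$ root groups, which is precisely where the slack from $r\geq s+2$ is consumed, in direct analogy with the role of the condition $q>p^2$ in the Jiang--Qiu result \cite{JQ19} for exponents near one.

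Given balanced supersaturation, the deduction of $F_{r,s}^t\subseteq G$ is by now standard (cf.\ \cite{BC18,JQ19,JJM20}): one samples a random subset $R\subseteq A$ of appropriate density, retains only copies from $\mathcal{F}$ whose roots all lie in $R$ and whose root set is not shared with too many other surviving copies, and uses the codegree bound to guarantee that a typical surviving $rs$-set of roots $W_1\cup\cdots\cup W_r$ still supports many copies of $F_{r,s}$. A pigeonholing argument then yields $t$ copies with the same root set and pairwise disjoint apex and intermediate vertices, which together form a copy of $F_{r,s}^t$. The main obstacle is thus concentrated in the balanced supersaturation step, where one must carry the codegree control through every possible placement of $S$ in the tree $F_{r,s}$; the assumption $r\geq s+2$ is what makes this bookkeeping close up.
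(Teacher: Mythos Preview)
Your plan follows the balanced supersaturation route of \cite{JJM20}: build a large family $\mathcal{F}$ of copies of $F_{r,s}$ with controlled codegrees on every partial root set, then random-sample and pigeonhole to extract $t$ copies with identical roots. This is \emph{not} what the paper does, and the gap in your proposal is exactly the step you flag as ``the technically involved point'': you assert that the slack from $r\geq s+2$ is enough to run the dichotomy over all $\ell$-subsets $S$, but you do not carry it out. Jiang, Jiang and Ma ran essentially the same programme and needed $r\geq s^3-1$ to make the codegree bookkeeping close up; nothing in your sketch explains how to drop two powers of $s$ from that requirement. The strict-balancedness computation you give, $\rho_F(T)-\rho(F_{r,s})=(r-s)(r-k)/((k+1)(r+1))$, is correct, but strict balancedness by itself does not deliver balanced supersaturation at the threshold density---the quantitative margin matters, and you have not shown that the margin from $r\geq s+2$ suffices. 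As it stands the proposal is a restatement of the problem rather than a proof.

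The paper avoids balanced supersaturation altogether. After regularising, it shows that almost all copies of $F_{r,s}$ are \emph{nice} (each of the $r$ internal $(s+1)$-stars is ``light'', i.e.\ its leaves have codegree below $|V(H)|$). It then fixes a typical $(r-1)s$-tuple of leaves $x_{i,j}$ ($1\leq i\leq r-1$), defines the \emph{locus} $X$ of possible images of the apex $y$ over those fixed leaves, and argues directly: if many nice copies share a full leaf set, a pigeonhole on the non-root vertices forces some $z_k$ to be mapped to a common vertex $v$, which then has many neighbours in $X$. Averaging over choices of the last $s$ leaves produces many edges from $X$ to vertices with large degree into $X$; a short dependent-random-choice lemma then locates $F_{r,s}^t$ inside this unbalanced bipartite piece. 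The condition $r\geq s+2$ enters only in a single numerical inequality at the very end, namely $(s^2+s+1)\bigl(1-\tfrac{r+1}{rs+r}\bigr)\geq s^2$. No supersaturated family, no codegree hierarchy over all $\ell\leq rs$, and no random sampling step are needed.
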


This improves on a result of Jiang, Jiang and Ma~\cite{JJM20}, who proved a result similar to Theorem~\ref{thm:main2}, but under the more restrictive assumption that $r \geq s^3 - 1$. While our argument, which we outline in the  next subsection, shares some ideas with theirs, it is considerably simpler.

To see that Theorem~\ref{thm:main2} implies Theorem~\ref{thm:main}, we require one more ingredient, a key observation of Kang, Kim and Liu~\cite{KKL18} saying that if the exponent $2 - \frac{a}{a p_0 +q}$ is realisable by a power of a balanced rooted graph, then so is $2 - \frac{a}{ap+q}$ for all $p \geq p_0$. But
\[2 - \frac{r+1}{rs+r} = 2 - \frac{r+1}{(r+1)s +(r- s)},\]
so the observation of Kang, Kim and Liu implies that the exponent $2 - \frac{r+1}{(r+1)p + (r- s)}$ is realisable for  all $p \geq s$. Since $r- s$ ranges from $2$ to $r-1$, this means that we get all exponents of the form $2 - \frac{r+1}{d}$ with $r \geq 3$, $d \geq r^2$ 
and $d \not\equiv -1, 0, 1 \pmod{r+1}$. 
Therefore, setting $a = r+1$, we see that $2 - \frac{a}{b}$ is realisable provided $a \geq 4$, $b \geq (a-1)^2$ and $b \not\equiv -1, 0, 1 \pmod{a}$. 
The remaining cases, where $a \in \{1, 2, 3\}$ or $b \equiv -1, 0, 1 \pmod{a}$, have all previously appeared in the literature (see, for instance,  \cite{KKL18}). 
It therefore remains to prove Theorem~\ref{thm:main2}.

\subsection{An outline of the proof}

Let $G$ be an $n$-vertex graph with $Cn^{2-\frac{r+1}{rs+r}}$ edges, where $C$ is taken sufficiently large in terms of $r$, $s$ and $t$. We want to show that $G$ contains $F_{r,s}^t$ as a subgraph. As is usual when estimating extremal numbers, we may assume that $G$ is $K$-almost-regular for some constant $K$ depending only on $r$ and $s$, by which we mean that every vertex in $G$ has degree at most $K$ times the minimum degree $\delta(G)$.

Suppose that $G$ does not contain $F_{r,s}^t$ as a subgraph. First, we show that, among all stars in $G$ with $s+1$ leaves, the proportion of those in which the leaves have codegree at least $|V(F_{r,s}^t)|$ is only $o(1)$. Indeed, otherwise we could find a vertex $u\in V(G)$ such that a positive proportion of the $(s+1)$-sets  in $N(u)$ have codegree at least $|V(F_{r,s}^t)|$. However, since $F_{r,s}^t$ is the subdivision of an $(s+1)$-partite $(s+1)$-uniform hypergraph, this would imply that $F_{r,s}^t$ can be embedded into $G$ with one part of the bipartition mapped to a subset of $N(u)$.

We call a copy of $F_{r,s}$ in $G$ \emph{nice} if, for each $1\leq i\leq r$, the codegree of the images of $y,w_{i,1},\dots,w_{i,s}$ is at most $|V(F_{r,s}^t)|$. By the previous paragraph and since $G$ is almost regular, almost all copies of $F_{r,s}$ in $G$ are nice.

Suppose now that we have a large collection of nice copies of $F_{r,s}$ in $G$ all of which have the same leaf set, i.e., they all map the $w_{i,j}$ to the same vertices $x_{i,j}$. Since $G$ is $F_{r,s}^t$-free, there cannot be $t$ of these copies of $F_{r,s}$ which are pairwise vertex-disjoint apart from the $x_{i,j}$. Hence, a positive proportion of them must map one of $y,z_1,\dots,z_r$ to the same vertex in $G$. However, there cannot exist many nice copies of $F_{r,s}$ which map $y$ and all the $w_{i,j}$ to the same set of vertices. Hence, we find that a positive proportion of the nice $F_{r,s}$ rooted at the $x_{i,j}$ must map some $z_k$ to the same vertex $v\in V(G)$. For the sake of notational simplicity, we will assume that a positive proportion of the copies rooted at the $x_{i,j}$ map $z_r$ to $v$. The crucial observation is that this means that $v$ sends many edges to a relatively small set that depends only on the vertices $x_{i,j}$ for $1\leq i\leq r-1,1\leq j\leq s$. More precisely, $v$ is clearly a neighbour of the image of $y$ in every copy of $F_{r,s}$ that maps $z_r$ to $v$. However, the locus of the possible images of $y$ is rather restricted: if $u$ is such an image, then, for each $1\leq i\leq r-1$, the vertices $u,x_{i,1},\dots,x_{i,s}$ have a common neighbour.

Fix a ``typical'' collection of vertices $x_{i,j}$, $1\leq i\leq r-1,1\leq j\leq s$, and let $X$ be the locus of the possible images of $y$ in embeddings of $F_{r,s}$ that map $w_{i,j}$ to $x_{i,j}$ for all $1\leq i\leq r-1,1\leq j\leq s$. For any $u\in X$, there are around $\delta(G)^{s+1}$ embeddings of $F_{r,s}$ that map $y$ to $u$ and $w_{i,j}$ to $x_{i,j}$ for each $1\leq i\leq r-1,1\leq j\leq s$, since we can ``freely" choose how $z_r,w_{r,1},\dots,w_{r,s}$ are embedded. If we assume that $|X|$ is about as large as it would be in a random graph with the same edge density, then, on average, for each embedding of $F_{r,s}$ which maps $w_{i,j}$ to $x_{i,j}$ for each $1\leq i\leq r-1,1\leq j\leq s$, there are a large constant number of copies of $F_{r,s}$ with the same leaves. Assuming that these copies are nice, the previous paragraph shows that there are many embeddings of $F_{r,s}$ which map $w_{i,j}$ to $x_{i,j}$ for all $1\leq i\leq r-1,1\leq j\leq s$ with the property that the image of $z_r$ has a large constant number of neighbours in $X$. This then allows us to conclude that there are many edges $uv\in E(G)$ with $u\in X$ such that $v$ has a large constant number of neighbours in $X$, which in turn yields a very unbalanced bipartite subgraph of $G$ with parts $X$ and $Y$ where every $v\in Y$ has many neighbours in $X$. This subgraph contains many stars with $s+1$ leaves centred in $Y$ and, for most of them, the leaves have large codegree, contradicting the observation made in the second paragraph.

\section{Proof of Theorem \ref{thm:main2}}

Fix $r\geq s+2\geq 3$, $t\geq 1$ and let $H=F_{r,s}^t$. We begin  our proof by defining what it means for a star with $s+1$ leaves to be heavy and then showing that there cannot be too many such stars. Originating in work of Conlon and Lee~\cite{CL21} and Janzer~\cite{Jan19} on extremal numbers of subdivisions, similar definitions and results appear often in the recent literature on the rational exponents conjecture.

\begin{definition}
	We call a star with $s+1$ leaves \emph{heavy} if the leaves have codegree at least $|V(H)|$ and \emph{light} otherwise.
\end{definition}

\begin{lemma} \label{lem:heavy stars}
	For any $\eps>0$, there is a constant $C=C(\eps,H)$ such that the following holds. Let $G$ be an $H$-free bipartite graph with parts $X$ and $Y$ and minimum degree at least $C$ on side $Y$. Then the proportion of heavy $(s+1)$-stars among all $(s+1)$-stars centred in $Y$ is at most $\eps$.
\end{lemma}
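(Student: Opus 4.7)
The plan is to argue by contradiction: assume more than an $\eps$-fraction of the $(s+1)$-stars centered in $Y$ are heavy, and deduce an embedding of $H=F_{r,s}^t$ into $G$.

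First, a routine averaging step applied to the inequality $\sum_{u\in Y} h(u)\ge \eps\sum_{u\in Y}\binom{d(u)}{s+1}$, where $h(u)$ counts heavy stars centered at $u$, produces a single vertex $u\in Y$ with $h(u)\ge (\eps/2)\binom{d(u)}{s+1}$. Since $d(u)\ge C$ by hypothesis, $|N(u)|$ can be made arbitrarily large in terms of $\eps$ and $H$ by choosing $C$ large.

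Next, observe that $F_{r,s}^t$ is precisely the subdivision of the $(s+1)$-partite, $(s+1)$-uniform hypergraph $\cH_0$ with vertex set $\{y^{(k)}:1\le k\le t\}\cup\{w_{i,j}:1\le i\le r,\,1\le j\le s\}$, with hyperedges $\{y^{(k)},w_{i,1},\dots,w_{i,s}\}$ indexed by pairs $(k,i)$ (where each hyperedge corresponds to the subdivision vertex $z_i^{(k)}$), and with $(s+1)$-partition given by $\{y^{(k)}\}_k$ together with $\{w_{i,j}\}_i$ for each $1\le j\le s$. I would then view the heavy $(s+1)$-subsets of $N(u)$ as the edges of an $(s+1)$-uniform hypergraph on $N(u)$ of density at least $\eps/2$. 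A standard K\H{o}v\'ari--S\'os--Tur\'an-type (supersaturation) result for $(s+1)$-partite $(s+1)$-uniform hypergraphs, namely $\ex(N,\cH_0)=o(N^{s+1})$, then supplies, once $|N(u)|$ is large enough, a copy of $\cH_0$ inside this heavy hypergraph.

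Such a copy places the $y^{(k)}$'s and $w_{i,j}$'s at distinct vertices of $N(u)\subseteq X$, with each of the $tr$ specified $(s+1)$-subsets heavy and therefore of codegree at least $|V(H)|$ in $Y$. The subdivision vertices $z_i^{(k)}\in Y$ can then be chosen greedily: at each step at most $tr-1$ previously selected $z$'s must be avoided, and since $|V(H)|=t+tr+rs>tr$ a valid common neighbor always remains; distinctness from the $y^{(k)}$'s and $w_{i,j}$'s is automatic as these lie in the opposite part $X$. The resulting copy of $H$ in $G$ contradicts $H$-freeness. The one step requiring any real care is the hypergraph embedding, but it is classical; alternatively, one can embed $\cH_0$ vertex by vertex via iterated averaging inside $N(u)$, bypassing any external lemma at the cost of a little bookkeeping.
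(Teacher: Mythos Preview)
Your argument is correct and follows essentially the same route as the paper: reduce to a single centre $u\in Y$, view the heavy $(s+1)$-subsets of $N(u)$ as an $(s+1)$-uniform hypergraph, invoke $\ex(N,\cH_0)=o(N^{s+1})$ for the $(s+1)$-partite hypergraph $\cH_0$ underlying $H$, and then greedily pick the subdivision vertices. The only cosmetic difference is that the paper proves the per-vertex bound directly (which immediately implies the global one), whereas you average from the global assumption down to a single $u$; your greedy step just spells out what the paper leaves as ``easy to see.''
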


\begin{proof}
	It suffices to prove that for each $u\in Y$, the proportion of heavy stars among all stars centred at $u$ is at most $\eps$. Define an $(s+1)$-uniform hypergraph $\cG$ on vertex set $N(u)$ by setting $S\subset N(u)$ with $|S| = s+1$ to be an edge of $\cG$ if and only if the common neighbourhood (in $G$) of the vertices in $S$ has order at least $|V(H)|$. We also define an $(s+1)$-uniform hypergraph $\cH$ with vertices $y_k$ for $1\leq k\leq t$ and $w_{i,j}$ for $1\leq i\leq r,1\leq j\leq s$ whose edges are $\{y_kw_{i,j}: 1\leq j\leq s\}$ for every $1\leq k\leq t,1\leq i\leq r$. It is easy to see that if $\cG$ contains a copy of $\cH$, then there exists a copy of $H$ in $G$. Moreover, $\cH$ is $(s+1)$-partite (the parts being $\{y_1,\dots,y_t\}$ and $\{w_{i,j}:1\leq i\leq r\}$ for each $1\leq j\leq s$), so $\ex(n,\cH)=o(n^{s+1})$. It follows that if $|N(u)|$ is large enough in terms of $\eps$ and $\cH$, then there are at most $\eps \binom{|N(u)|}{s+1}$ heavy $(s+1)$-stars in $G$ with centre $u$. Since $\cH$ depends only on $H$, the proof is complete.
\end{proof}

We now make a few definitions which capture some of the main ideas in our proof.

\begin{definition}
	Let $F$ be a labelled copy of $F_{r,s}$ with vertices $y,z_i,w_{i,j}$ as before. We call $F$ \emph{nice} if, for each $1\leq i\leq r$, the $(s+1)$-star with centre $z_i$ and leaves $y,w_{i,1},\dots,w_{i,s}$ is light.
\end{definition}

\begin{definition} \label{def:locus}
	\sloppy For distinct vertices $x_{i,j}$ with $1\leq i\leq r-1$, $1\leq j\leq s$ in a graph $G$, let $S(x_{1,1},\dots,x_{1,s},x_{2,1},\dots,x_{2,s},\dots,x_{r-1,1},\dots,x_{r-1,s})$ be the set of vertices $u\in V(G)$ for which there are vertices $v_1,\dots,v_{r-1}$ such that $u$, the $v_i$ and the $x_{i,j}$ are all distinct, $uv_i\in E(G)$ for all $i$ and $v_ix_{i,j}\in E(G)$ for all $i,j$.
\end{definition}

\begin{definition}
	\sloppy Let $F$ be a nice labelled copy of $F_{r,s}$ with vertices $y,z_i,w_{i,j}$ and let $q$ be the number of nice labelled copies of $F_{r,s}$ with the same labelled leaf set as $F$. For $c>0$ and $1\leq k\leq r$, we call $F$ \emph{$(c,k)$-rich} if $z_k$ has at least $cq$ neighbours in $S(w_{1,1},\dots,w_{1,s},\dots,w_{k-1,1},\dots,w_{k-1,s},w_{k+1,1},\dots,w_{k+1,s},\dots,w_{r,1},\dots,w_{r,s})$.
\end{definition}

The next lemma shows that if an $H$-free graph $G$ has many nice copies of $F_{r,s}$ sharing the same leaves, then many of those copies of $F_{r,s}$ are rich.

\begin{lemma} \label{lem:rich with fixed leaves}
	There exist positive constants $c=c(H)$ and $C=C(H)$ such that the following holds. Let $G$ be an $H$-free graph and let $x_{i,j}$, for $1\leq i\leq r$, $1\leq j\leq s$, be distinct vertices in $G$. Assume that there are $q\geq C$ nice labelled copies of $F_{r,s}$ in $G$ with $w_{i,j}$ mapped to $x_{i,j}$ for all $i,j$. Then there is some $1\leq k\leq r$ such that the number of $(c,k)$-rich labelled copies of $F_{r,s}$ with $w_{i,j}$ mapped to $x_{i,j}$ for all $i, j$ is at least $cq$.
\end{lemma}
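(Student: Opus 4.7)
The plan is to use the $H$-freeness of $G$ together with niceness to pigeonhole in two stages: first to produce a vertex $v$ that appears as a non-leaf vertex in many of the $q$ nice copies, then to pin down a specific role $p \in \{y, z_1, \ldots, z_r\}$ that $v$ plays in many of them. A short case analysis on $p$ will complete the argument.

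For the first stage, let $\mathcal{M}$ be a maximum subfamily of the $q$ nice copies that are pairwise vertex-disjoint apart from the leaves $x_{i,j}$. If $|\mathcal{M}| \geq t$, then gluing these $t$ copies at the common leaves produces a subgraph isomorphic to $H = F_{r,s}^t$, contradicting $H$-freeness; hence $|\mathcal{M}| \leq t-1$. By maximality, every one of the $q$ nice copies shares a non-leaf vertex with some member of $\mathcal{M}$, and since $\mathcal{M}$ has at most $(r+1)(t-1)$ non-leaf vertices in total, a first pigeonhole yields a vertex $v$ lying in at least $q/((r+1)(t-1))$ of the copies. A second pigeonhole over the $r+1$ non-leaf positions then produces a role $p$ for which $v$ is the image of $p$ in at least $N := q/((r+1)^2(t-1))$ of these nice copies.

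I would rule out $p = y$ as follows: if $v$ plays the role of $y$ in a nice copy with the given leaves, then for each $1 \leq i \leq r$ the image of $z_i$ lies in the common neighbourhood of $v, x_{i,1}, \ldots, x_{i,s}$, which by niceness has size less than $|V(H)|$. So there are at most $|V(H)|^r$ such copies in total, and choosing $C > (r+1)^2(t-1)|V(H)|^r$ renders this case impossible. Hence $p = z_k$ for some $k$. In each of the $N$ copies with $z_k \mapsto v$, the image of $y$ is a neighbour of $v$ lying in $S := S(x_{1,1},\ldots,x_{k-1,s},x_{k+1,1},\ldots,x_{r,s})$, as witnessed by the images of $z_i$ for $i \neq k$. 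Applying niceness once more, for any fixed candidate image $u$ of $y$ and each $i \neq k$, the image of $z_i$ has fewer than $|V(H)|$ choices; so at most $|V(H)|^{r-1}$ of the $N$ copies can share the same image of $y$, and consequently $v$ has at least $N/|V(H)|^{r-1}$ distinct neighbours in $S$.

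Setting $c := 1/((r+1)^2(t-1)|V(H)|^{r-1})$, this lower bound on the number of neighbours of $v$ in $S$ is at least $cq$, so every one of the $N \geq cq$ copies with $z_k \mapsto v$ is $(c,k)$-rich. The main conceptual step is the opening maximal-matching argument, which is the only place the $H$-freeness of $G$ is invoked; the remaining work consists of chaining niceness-based codegree bounds through the two pigeonhole steps, which is routine but requires care in keeping the various constants aligned so that the same $c$ controls both the count of rich copies and the required neighbour count in $S$.
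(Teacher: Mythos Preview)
Your argument is correct and follows essentially the same route as the paper: a maximal family of leaf-disjoint copies bounded by $H$-freeness, a double pigeonhole over the at most $(t-1)(r+1)$ non-leaf vertices and the $r+1$ positions, elimination of the $y$-case via niceness, and a second niceness bound to count distinct $y$-images as neighbours of $v$ in $S$. The only (immaterial) difference is that you exploit the already-fixed $z_k = v$ to get the slightly sharper $|V(H)|^{r-1}$ in place of the paper's $|V(H)|^r$, yielding a marginally better constant $c$.
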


\begin{proof}
    Let $C=(t-1)(r+1)^2|V(H)|^r+1$ and $c=1/((t-1)(r+1)^2|V(H)|^r)$. Since $G$ is $H$-free, there cannot be more than $t-1$ copies of $F_{r,s}$ which all have the same leaves $x_{i,j}$ but are otherwise pairwise vertex-disjoint. This means that any maximal collection of copies of $F_{r,s}$ with leaves $x_{i,j}$ which are otherwise pairwise disjoint cover a set $R$ of at most $(t-1)(r+1)$ vertices in addition to $\{x_{i,j}:1\leq i\leq r,1\leq j\leq s\}$. Because of the maximality, any labelled copy of $F_{r,s}$ with leaves $x_{i,j}$ must map one of $y,z_1,\dots,z_r$ to an element of $R$. By the pigeonhole principle, there are therefore at least $q/(|R|(r+1))\geq q/((t-1)(r+1)^2)$ nice copies of $F_{r,s}$ with leaves $x_{i,j}$ in which one of the vertices $y,z_1,\dots,z_r$ is mapped to the same vertex $v$ in $G$. By the condition that these copies are nice, $y$ cannot be mapped to the same vertex in more than $|V(H)|^r$ copies. Hence, since $q\geq C>(t-1)(r+1)^2|V(H)|^r$, 
    there is some $1\leq k\leq r$ such that $z_k$ is mapped to the same vertex $v$ in at least $q/((t-1)(r+1)^2)$ copies. Again using the fact that $y$ is mapped to the same vertex at most $|V(H)|^r$ many times, it follows that there are at least $q/((t-1)(r+1)^2|V(H)|^r)=cq$ different images of $y$ in these copies. All of these vertices are in $S(x_{1,1},\dots,x_{1,s},\dots,x_{k-1,1},\dots,x_{k-1,s},x_{k+1,1},\dots,x_{k+1,s},\dots,x_{r,1},\dots,x_{r,s})$ and all of them are neighbours of $v$. Thus, all nice copies of $F_{r,s}$ mapping $w_{i,j}$ to $x_{i,j}$ for every $i,j$ and $z_k$ to $v$ are $(c,k)$-rich.
\end{proof}

The upshot of what we have done so far is the following lemma, which says that, under a mild technical condition on the degrees (that we will in any case be able to assume), any $H$-free graph must have many rich copies of $F_{r,s}$.

\begin{lemma} \label{lem:many rich}
	For any positive real number $K$, there are positive constants $c=c(H)$ and $C=C(K,H)$ such that the following holds. Let $G$ be an $H$-free $n$-vertex bipartite graph with minimum degree $\delta\geq Cn^{1-\frac{r+1}{rs+r}}$ and maximum degree at most $K\delta$. Then $G$ has at least $cn\delta^{rs+r}$ $(c,r)$-rich labelled copies of $F_{r,s}$.
\end{lemma}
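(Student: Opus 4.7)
The plan is to combine Lemmas~\ref{lem:heavy stars} and~\ref{lem:rich with fixed leaves} with a symmetry across the indices $z_1,\dots,z_r$. First, I would show that $G$ contains at least $c_1 n\delta^{r(s+1)}$ labelled copies of $F_{r,s}$, for some $c_1=c_1(K,r,s)>0$, by embedding $F_{r,s}$ greedily: choose $y\in V(G)$, then each $z_i\in N(y)$, and then each $w_{i,j}\in N(z_i)$, avoiding previously chosen vertices (which costs only a constant number of options at each step since $|V(F_{r,s})|$ is bounded and $\delta$ is much larger). Next, I would apply Lemma~\ref{lem:heavy stars} to each side of the bipartition of $G$ (valid once $\delta$ exceeds the constant in that lemma, which holds whenever $C$ is large enough) to conclude that heavy $(s+1)$-stars form at most an $\eps$-fraction of all $(s+1)$-stars in $G$. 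By the maximum-degree hypothesis, any fixed $(s+1)$-star extends to at most $(K\delta)^{(r-1)(s+1)}$ labelled copies of $F_{r,s}$ that use it as the star at some $z_i$, so at most $O(\eps n\delta^{r(s+1)})$ labelled copies fail to be nice; choosing $\eps$ small enough in terms of $K,r,s$ leaves $\Omega(n\delta^{rs+r})$ nice labelled copies.

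Next I would group the nice copies by their labelled leaf set $X=(x_{i,j})\in V(G)^{rs}$, letting $N(X)$ denote the number of nice labelled copies with leaf set $X$. For each $X$ with $N(X)\geq C_0$, where $C_0$ is the constant from Lemma~\ref{lem:rich with fixed leaves}, that lemma supplies an index $k=k(X)$ and at least $c_0 N(X)$ labelled copies with leaf set $X$ that are $(c_0,k)$-rich. The leaf sets with $N(X)<C_0$ contribute at most $C_0 n^{rs}$ nice copies in aggregate, and the hypothesis $\delta\geq Cn^{1-(r+1)/(rs+r)}$ forces $n\delta^{rs+r}\geq C^{rs+r}n^{rs}$, so taking $C$ large enough (depending on $K$ and $H$) absorbs this loss and still leaves $\Omega(n\delta^{rs+r})$ nice copies on popular leaf sets. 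Summing Lemma~\ref{lem:rich with fixed leaves} over these $X$ yields $\Omega(n\delta^{rs+r})$ labelled copies that are $(c_0,k)$-rich for some $k\in\{1,\dots,r\}$.

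Finally, to promote ``some $k$'' to $k=r$, I would invoke the automorphism $\sigma$ of $F_{r,s}$ that transposes $z_k\leftrightarrow z_r$ and $w_{k,j}\leftrightarrow w_{r,j}$ for each $j$: precomposing any labelled copy with $\sigma$ gives another labelled copy, preserves niceness (since the set of stars at the $z_i$ is merely permuted), and, because the locus $S(\cdot)$ depends only on the unordered collection of groups of leaves, it converts $(c_0,k)$-richness into $(c_0,r)$-richness. This bijection shows that the total number of $(c_0,k)$-rich nice labelled copies is the same for every $k$, so by pigeonhole at least a $1/r$-fraction of the copies produced above are $(c_0,r)$-rich. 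Setting $c:=\min(c_0,\,c_0c_1/(2r))$ then gives the desired bound.

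The only non-routine aspect is ordering the constant dependencies: $\eps$ must be chosen as a function of $K,r,s$ to control the heavy-star loss, and then $C$ must be chosen as a function of $K$ and $H$ (absorbing $\eps$ and $C_0$) so that both the non-nice contribution and the unpopular-leaf-set contribution are dominated by the greedy lower bound. No structural difficulty arises beyond this bookkeeping; the symmetry step in the last paragraph is what actually pins down the index $k=r$ required by the statement.
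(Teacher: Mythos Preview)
Your proposal is correct and follows essentially the same route as the paper's proof: a greedy lower bound on labelled copies, Lemma~\ref{lem:heavy stars} to bound the non-nice copies, discarding unpopular leaf sets via the inequality $n\delta^{rs+r}\geq C^{rs+r}n^{rs}$, Lemma~\ref{lem:rich with fixed leaves} on the remainder, and then the symmetry of $F_{r,s}$ under permuting the $z_i$ to pass from ``some $k$'' to $k=r$. The paper's version is terser---it simply notes that the number of $(c,k)$-rich labelled copies is independent of $k$ without spelling out the automorphism---but the content is the same.
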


\begin{proof}
	The number of labelled copies of $F_{r,s}$ in $G$ is at least $\frac{1}{2}n\delta^{rs+r}$. Let $\eps=\frac{1}{4rK^{rs+r}}$. By Lemma \ref{lem:heavy stars}, if $C$ is sufficiently large compared to $K$ and $H$, then the proportion of heavy $(s+1)$-stars in $G$ is at most $\eps$. Then, by the maximum degree condition, there are at most $\eps n(K\delta)^{s+1}$ labelled heavy $(s+1)$-stars. Thus, again using the maximum degree assumption, there are at most $r\cdot \eps n (K\delta)^{s+1}\cdot (K\delta)^{rs+r-(s+1)}=\frac{1}{4}n\delta^{rs+r}$ labelled copies of $F_{r,s}$ in $G$ which contain a heavy $(s+1)$-star. It follows that there are at least $\frac{1}{4}n\delta^{rs+r}\geq \frac{C^{rs+r}}{4}n^{rs}$ nice labelled copies of $F_{r,s}$ in $G$. Let $C'$ be the constant $C(H)$ from Lemma \ref{lem:rich with fixed leaves}. Clearly, there are at most $C'n^{rs}$ nice labelled copies of $F_{r,s}$ whose leaves $w_{i,j}$ are mapped to some $x_{i,j}$ for all $1\leq i\leq r$, $1\leq j\leq s$ with the property that there are fewer than $C'$ nice labelled copies of $F_{r,s}$ with $w_{i,j}$ mapped to $x_{i,j}$. Hence, if $C$ is sufficiently large, then these nice labelled copies of $F_{r,s}$ amount to at most half of all nice labelled copies of $F_{r,s}$. The statement then follows from Lemma \ref{lem:rich with fixed leaves} by noting that the number of $(c,k)$-rich labelled copies of $F_{r,s}$ in $G$ is the same for every $k$.
\end{proof}

The following lemma is the last ingredient needed for the proof of Theorem \ref{thm:main2}.

\begin{lemma} \label{lem:dependent random choice}
	There is a constant $C_0=C_0(H)$ such that the following holds. Let $G$ be a bipartite graph with parts $X$ and $Y$ such that there are at least $|X|p$ edges $xy$ for which $x\in X$, $y\in Y$ and $y$ has degree at least $q$ in $G$. If $q\geq C_0$ and $pq^s\geq C_0|X|^s$, then $G$ contains $H$ as a subgraph.
\end{lemma}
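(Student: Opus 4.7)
The plan is to argue by contradiction using Lemma~\ref{lem:heavy stars}. Assume $G$ is $H$-free and set $Y' := \{y \in Y : \deg_G(y) \geq q\}$ and $G' := G[X \cup Y']$. Then $G'$ is still $H$-free and has minimum degree at least $q$ on the $Y'$ side, so, provided $C_0 \geq C(1/2, H)$ (the constant from Lemma~\ref{lem:heavy stars}), that lemma applied to $G'$ with $\eps = 1/2$ yields that at least half of the $(s+1)$-stars in $G'$ centred in $Y'$ are light, i.e., have leaves with codegree in $G'$ at most $|V(H)| - 1$.

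Next, I would double-count these stars. On one hand, each $(s+1)$-subset of $X$ serves as the leaf set of at most $|V(H)| - 1$ light stars, so the total number of light stars is at most $|V(H)|\binom{|X|}{s+1} \leq |V(H)| |X|^{s+1}/(s+1)!$. On the other hand, writing $E := e(X, Y') \geq |X|p$, the power-mean inequality gives $\sum_{y \in Y'} \deg(y)^{s+1} \geq E^{s+1}/|Y'|^s$, and combined with $|Y'| q \leq E$ (every vertex of $Y'$ has degree at least $q$) and the bound $\binom{n}{s+1} \geq n^{s+1}/(2^{s+1}(s+1)!)$ for $n \geq 2s$ (valid since $q \geq C_0 \geq 2s$), this yields
\[
\sum_{y \in Y'} \binom{\deg(y)}{s+1} \geq \frac{E q^s}{2^{s+1}(s+1)!} \geq \frac{|X| p q^s}{2^{s+1}(s+1)!} \geq \frac{C_0 |X|^{s+1}}{2^{s+1}(s+1)!},
\]
where the hypothesis $pq^s \geq C_0 |X|^s$ is used in the last step.

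Since at least half of these stars are light, comparing the two bounds forces $C_0 \leq 2^{s+2} |V(H)|$. Choosing $C_0$ strictly larger than this, together with $C_0 \geq C(1/2, H)$ and $C_0 \geq 2s$, then produces the desired contradiction and shows that $G$ contains $H$. The only thing requiring any care is picking $C_0$ large enough to beat all three lower bounds simultaneously; the substantive content is just Lemma~\ref{lem:heavy stars} combined with a standard convexity estimate, with no deeper obstacle arising.
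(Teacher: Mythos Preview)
Your proof is correct and follows essentially the same approach as the paper: restrict to vertices of $Y$ with degree at least $q$, invoke Lemma~\ref{lem:heavy stars} with $\eps=1/2$, and then compare a lower bound on the number of $(s+1)$-stars centred in $Y$ (of order $|X|pq^s$) against the trivial upper bound $|V(H)||X|^{s+1}$ on light stars. The only cosmetic difference is that the paper obtains the lower bound by extending each of the $|X|p$ edges in $\binom{q-1}{s}$ ways (and dividing by $s+1$), whereas you go through Jensen and the bound $|Y'|q\le E$; both routes yield the same estimate up to constants.
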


We will prove Lemma \ref{lem:dependent random choice} using Lemma \ref{lem:heavy stars}, but we remark that it can also be proved directly using dependent random choice.

\begin{proof}
    We may assume, by shrinking $Y$ if necessary, that each $y\in Y$ has degree at least $q$. Then any edge in $G$ can be extended in at least $\binom{q-1}{s}$ ways to an $(s+1)$-star centred in $Y$. Hence, the conditions of the lemma guarantee that $G$ has at least $|X|p\binom{q-1}{s}/(s+1)$ stars with $s+1$ leaves centred in $Y$. Suppose that $G$ is $H$-free. If $C_0$ is sufficiently large, then Lemma \ref{lem:heavy stars} implies that at least half of the $(s+1)$-stars centred in $Y$ are light. If again $C_0$ is sufficiently large, then, since $pq^s\geq C_0|X|^s$, there are more than $|V(H)||X|^{s+1}$ light $(s+1)$-stars centred in $Y$. However, since there are at most $|X|^{s+1}$ choices for the set of $s+1$ leaves and, given such a choice, there are at most $|V(H)|$ possibilities for the centre, this is a contradiction.
\end{proof}

We are now ready to complete the proof of Theorem~\ref{thm:main2}. By a reduction going back to work of Erd\H{o}s and Simonovits~\cite{ES70}, we may assume that our graph is {\it $K$-almost-regular} for some constant $K$ depending only on $r$ and $s$, by which we mean that $\max_{v \in V(G)} \deg(v) \leq K \min_{v \in V(G)} \deg(v)$. As noted in~\cite{CL21}, we may also assume that the graph is bipartite, reducing our task to proving the following result.

\begin{theorem} \label{thm:regular}
	For any positive real number $K$, there is a constant $C=C(K,H)$ such that if $G$ is an $n$-vertex bipartite graph with minimum degree $\delta \geq Cn^{1-\frac{r+1}{rs+r}}$ and maximum degree at most $K\delta$, then $G$ contains $H$ as a subgraph.
\end{theorem}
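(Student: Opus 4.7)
Assume for contradiction that $G$ does not contain $H$. By Lemma~\ref{lem:many rich} (and by inspecting its proof, taking the constant $C$ in the theorem sufficiently large so as to be allowed to restrict to leaf sets $\vx'$ with $n(\vx')$ well above the threshold in Lemma~\ref{lem:rich with fixed leaves}), $G$ has at least $cn\delta^{rs+r}$ $(c,r)$-rich labelled copies of $F_{r,s}$, and moreover each such rich copy $F$ satisfies $q_F \geq C_1$ for a constant $C_1 = C_1(H)$ that can be made arbitrarily large. In particular, for every rich $F$, the vertex $v := z_r(F)$ has at least $q_0 := cC_1$ neighbours in $X(\vx(F))$, where $\vx(F) := (x_{i,j})_{1 \leq i \leq r-1,\ 1 \leq j \leq s}$ is the partial leaf set.

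For a partial leaf set $\vx$, put $X := X(\vx)$ and let $Y := Y(\vx) = \{z_r(F) : F \text{ a rich copy with } \vx(F) = \vx\}$. Since each $v \in Y$ has at least $q_0$ neighbours in $X$, the bipartite subgraph of $G$ between $X$ and $Y$ has at least $q_0|Y|$ edges, all incident to vertices of $Y$ with bipartite-degree at least $q_0$. To deduce $H \subseteq G$ from Lemma~\ref{lem:dependent random choice} with the choices $q := q_0$ and $p := q_0|Y|/|X|$, one need only produce a $\vx$ for which
\[|Y(\vx)| \ \geq\ \frac{C_0}{q_0^{s+1}}\,|X(\vx)|^{s+1};\]
the resulting copy of $H$ would contradict the $H$-freeness of $G$.

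The critical step, and what I expect to be the main obstacle, is to produce such a $\vx$. A direct averaging does not work: the natural bounds
\[\sum_\vx |Y(\vx)| \ \geq\ \frac{cn\delta^{rs+r}}{(K\delta)^{r+s}}\qquad\text{and}\qquad \sum_\vx|X(\vx)|^{s+1}\ \leq\ n^{s+1}(K\delta)^{(s+1)(r-1)}\]
differ by a polynomial factor in $n$ in the wrong direction (since $\delta^{r-1}n^s$ grows with $n$), so no $\vx$ with the desired balance can be extracted by simple averaging. A more subtle counting is therefore required. One natural line of attack is to use Cauchy--Schwarz on the number of pairs of nice copies sharing their full leaf set to force the rich copies to concentrate on leaf sets $\vx'$ with many nice copies each; another is to refine the lower bound on $|Y(\vx)|$ by grouping rich copies by the pair $(y(F), z_r(F))$ and arguing that for a $\vx$ carrying many rich copies, $|X(\vx)|$ must be correspondingly small, producing the desired unbalanced bipartite subgraph. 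Successfully executing such a refinement is the main technical content of the proof.
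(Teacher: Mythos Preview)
Your proposal correctly sets up the framework and correctly identifies where the difficulty lies, but it leaves the decisive step unexecuted, and the fix you gesture at is not quite the one that works.

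The problem is that you commit to a \emph{constant} value $q=q_0=cC_1$ in Lemma~\ref{lem:dependent random choice}. With $q$ constant, the condition $pq^s\ge C_0|X|^s$ forces $|Y(\vx)|\gtrsim |X(\vx)|^{s+1}$, and as you yourself note, no averaging over partial leaf sets $\vx$ can deliver this. Your two suggested refinements (Cauchy--Schwarz on pairs of nice copies, or arguing that $|X(\vx)|$ is small when there are many rich copies) do not obviously lead anywhere: there is no reason $|X(\vx)|$ should be small, and indeed in the intended regime $|X|$ is of polynomial size in $n$.

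The missing idea is that $q$ should not be a constant but should \emph{scale with $|X|$}. The paper achieves this by choosing $\vx$ so that the rich copies with partial leaf set $\vx$ form a positive \emph{proportion} of \emph{all} labelled copies of $F_{r,s}$ with that partial leaf set (not merely many in absolute terms). Since every $u\in X$ admits $\gtrsim\delta^{s+1}$ completions to a full copy, this yields $|\mathcal{A}|\gtrsim |X|\delta^{s+1}$ rich copies over $\vx$. A \emph{second} averaging, now over the remaining $s$ leaves $w_{r,1},\dots,w_{r,s}$, then produces many rich copies whose full leaf set supports $q_F\gtrsim |X|\delta^{s+1}n^{-s}$ nice copies; by richness, their $z_r$ has that many neighbours in $X$. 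One then counts edges $(y,z_r)$ (each pair supports at most $|V(H)|^{r-1}(K\delta)^s$ copies) to get $p\sim\delta$. With $q\sim |X|\delta^{s+1}n^{-s}$, the $|X|^s$ factors in $pq^s\ge C_0|X|^s$ cancel, reducing the condition to $\delta^{s^2+s+1}\gtrsim n^{s^2}$, which is exactly where the hypothesis $r\ge s+2$ is used. The ``positive proportion'' selection of $\vx$ and the second averaging to boost $q$ are the two ingredients your outline is missing.
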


\begin{proof}
	Let $C$ be sufficiently large and suppose, for the sake of contradiction, that $G$ is $H$-free. By Lemma~\ref{lem:many rich}, there is a positive constant $c=c(H)$ such that $G$ has at least $cn\delta^{rs+r}$ $(c,r)$-rich labelled copies of $F_{r,s}$.
	
	\medskip
	
	\noindent \emph{Claim.} There are distinct vertices $x_{i,j}\in V(G)$ for $1\leq i\leq r-1$, $1\leq j\leq s$ such that the number of $(c,r)$-rich labelled copies of $F_{r,s}$ mapping $w_{i,j}$ to $x_{i,j}$ for $1\leq i\leq r-1,1\leq j\leq s$ is
	\begin{enumerate}
		\item at least $\frac{1}{2}cn\delta^{rs+r}n^{-(r-1)s}$ and \label{property:many ext}
		\item at least $c/(2K^{rs+r})$ times the number of all labelled copies of $F_{r,s}$ mapping $w_{i,j}$ to $x_{i,j}$ for $1\leq i\leq r-1,1\leq j\leq s$. \label{property:relatively many ext}
	\end{enumerate}
	
	\medskip
	
	\noindent \emph{Proof of Claim.} Clearly, the number of $(c,r)$-rich labelled copies of $F_{r,s}$ which agree with fewer than $\frac{1}{2}cn\delta^{rs+r}n^{-(r-1)s}$ $(c,r)$-rich labelled copies of $F_{r,s}$ on the images of $w_{i,j}$ ($1\leq i\leq r-1,1\leq j\leq s$) is less than $\frac{1}{2}cn\delta^{rs+r}$. Hence, there are at least $\frac{1}{2}cn\delta^{rs+r}$ $(c,r)$-rich labelled copies of $F_{r,s}$ such that each of them agrees with at least $\frac{1}{2}cn\delta^{rs+r}n^{-(r-1)s}$ other $(c,r)$-rich labelled copies of $F_{r,s}$ on the images $w_{i,j}$ ($1\leq i\leq r-1,1\leq j\leq s$). Moreover, the total number of labelled copies of $F_{r,s}$ in $G$ is at most $n(K\delta)^{rs+r}$. Since $\frac{\frac{1}{2}cn\delta^{rs+r}}{n(K\delta)^{rs+r}}=c/(2K^{rs+r})$, there are vertices $x_{i,j}$ satisfying the two conditions in the claim. \hfill $\Box$
	
	\medskip
	
	Fix some vertices $x_{i,j}$ ($1\leq i\leq r-1,1\leq j\leq s$) satisfying the conclusion of the claim and let $X=S(x_{1,1},\dots,x_{1,s},x_{2,1},\dots,x_{2,s},\dots,x_{r-1,1},\dots,x_{r-1,s})$. Moreover, let $\mathcal{A}$ be the set of $(c,r)$-rich labelled copies of $F_{r,s}$ mapping $w_{i,j}$ to $x_{i,j}$ for all $1\leq i\leq r-1,1\leq j\leq s$.
	Observe that
	\begin{equation}
	    |\mathcal{A}|\leq |X|(K\delta)^{s+1}|V(H)|^{r-1}. \label{eqn:upper bound on N}
	\end{equation}
	Indeed, there are at most $|X|$ ways to embed $y\in V(F_{r,s})$, by the maximum degree condition there are at most $(K\delta)^{s+1}$ ways to embed $z_r,w_{r,1},w_{r,2},\dots,w_{r,s}$ and, finally, since the copy needs to be nice, there are at most $|V(H)|$ ways to embed each of $z_1,z_2,\dots,z_{r-1}$. On the other hand, property \ref{property:many ext} of the claim asserts that $|\mathcal{A}|\geq \frac{1}{2}cn\delta^{rs+r}n^{-(r-1)s}$, so, by comparing this with (\ref{eqn:upper bound on N}), we get
	\begin{equation}
	    |X|(K\delta)^{s+1}|V(H)|^{r-1}\geq \frac{1}{2}cn\delta^{rs+r}n^{-(r-1)s}. \label{eqn:lower bound on |X|}
	\end{equation}
	
	Note also that the total number of labelled copies of $F_{r,s}$ mapping $w_{i,j}$ to $x_{i,j}$ for all $1\leq i\leq r-1,1\leq j\leq s$ is at least $|X|\delta^{s+1}/2$, since, after embedding $y$ to any vertex in $X$, there are at least $\delta^{s+1}/2$ ways to complete the embedding. It follows from property \ref{property:relatively many ext} of the claim that
	\begin{equation*}
	    |\mathcal{A}|\geq \frac{c}{4K^{rs+r}}|X|\delta^{s+1}.
	\end{equation*}
	The number of those elements of $\mathcal{A}$ which agree with fewer than $\frac{c}{8K^{rs+r}}|X|\delta^{s+1}n^{-s}$ elements of $\mathcal{A}$ on the images of $w_{r,1},\dots,w_{r,s}$ is at most $\frac{c}{8K^{rs+r}}|X|\delta^{s+1}$. Hence, there are at least $\frac{c}{8K^{rs+r}}|X|\delta^{s+1}$ elements of $\mathcal{A}$ such that each of them agrees with at least $\frac{c}{8K^{rs+r}}|X|\delta^{s+1}n^{-s}$ elements of $\mathcal{A}$ on the images of $w_{r,1},\dots,w_{r,s}$. By the definition of $(c,r)$-richness, for all these copies, the image of $z_r$ has at least $c\cdot \frac{c}{8K^{rs+r}}|X|\delta^{s+1}n^{-s}$ neighbours in $X$. By the maximum degree condition in $G$ and since any $(c,r)$-rich copy of $F_{r,s}$ is nice, we see that for any $u,v\in V(G)$, there are at most $|V(H)|^{r-1}(K\delta)^s$ elements of $\mathcal{A}$ which map $y$ to $u$ and $z_r$ to $v$.
Hence, $G$ has at least $\frac{\frac{c}{8K^{rs+r}}|X|\delta^{s+1}}{|V(H)|^{r-1}(K\delta)^s}=\frac{c}{8K^{rs+r+s}|V(H)|^{r-1}}|X|\delta$ edges $uv$ with $u\in X$ and $v\in V(G)$ such that $v$ has at least $c\cdot \frac{c}{8K^{rs+r}}|X|\delta^{s+1}n^{-s}$ neighbours in $X$. Set $Y=V(G)\setminus X$. Since $G$ is bipartite, any neighbour of a vertex in $X$ is in $Y$.
	
	We now want to apply Lemma \ref{lem:dependent random choice} to the bipartite graph $G\lbrack X,Y\rbrack$. By the previous paragraph, we can take $$p=\frac{c}{8K^{rs+r+s}|V(H)|^{r-1}}\delta$$
	and
	$$q=\frac{c^2}{8K^{rs+r}}|X|\delta^{s+1}n^{-s}$$
	and we just need to verify that $q\geq C_0$ and $pq^s\geq C_0|X|^s$, where $C_0=C_0(H)$ is the constant provided by Lemma \ref{lem:dependent random choice}. 
	
	But, by equation (\ref{eqn:lower bound on |X|}),
	$$q\geq \frac{c^3}{16K^{rs+r+s+1}|V(H)|^{r-1}}\delta^{rs+r}n^{1-rs}\geq \frac{c^3}{16K^{rs+r+s+1}|V(H)|^{r-1}}C^{rs+r}.$$
	When $C$ is sufficiently large, this is indeed at least $C_0$.
	Moreover,
	\begin{align*}
		pq^s
		&= \frac{c^{2s+1}}{8^{s+1}K^{rs+r+s+s(rs+r)}|V(H)|^{r-1}}\delta^{s^2+s+1}n^{-s^2}|X|^s \\
		&\geq \frac{c^{2s+1}}{8^{s+1}K^{rs+r+s+s(rs+r)}|V(H)|^{r-1}}C^{s^2+s+1}n^{(s^2+s+1)(1-\frac{r+1}{rs+r})-s^2}|X|^s.
	\end{align*}
	Since $r\geq s+2$, we have $(s^2+s+1)(1-\frac{r+1}{rs+r})-s^2\geq 0$, so we get that
	$$pq^s\geq \frac{c^{2s+1}}{8^{s+1}K^{rs+r+s+s(rs+r)}|V(H)|^{r-1}}C^{s^2+s+1}|X|^s\geq C_0|X|^s,$$
	provided that $C$ is sufficiently large. Hence,  we can indeed apply Lemma \ref{lem:dependent random choice} to find a copy of $H$ in $G$, which is a contradiction.
\end{proof}

\section{Concluding remarks}

Let $T_{r,s, s'}$ be the rooted tree obtained from $F_{r,s}$ by attaching $s'$ leaves to the vertex~$y$, all of which are taken to be roots. It is easy to verify that $T_{r,s, s'}$ is balanced if and only if $s'-1\leq s\leq r+s'$. 
In their paper, Jiang, Jiang and Ma \cite{JJM20} actually studied this family of graphs, which clearly includes $F_{r,s} = T_{r,s,0}$, showing that if $T_{r,s, s'}$ is balanced and $r \geq s^3 - 1$, then $\ex(n,T_{r,s,s'}^t)=O(n^{2-1/\rho})$ holds, where $\rho=\rho(T_{r,s,s'}) = \frac{rs+r+s'}{r+1}$. We can prove the same upper bound under the relaxed condition $r\geq s-s'+1$ (except in the case $s'=0$, where we need $r\geq s+2$), almost matching the inequality $r\geq s-s'$ required for balancedness. 

\begin{theorem}
    For any integers $s'\geq 1$, $s\geq s'-1$, $r\geq s-s'+1$ and $t\geq 1$, $\ex(n,T_{r,s,s'}^t)=O(n^{2-\frac{r+1}{rs+r+s'}})$.
\end{theorem}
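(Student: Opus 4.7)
The plan is to mimic the proof of Theorem~\ref{thm:main2}, carrying the extra root leaves $u_1,\ldots,u_{s'}$ of $T_{r,s,s'}$ as additional shared leaves throughout. The usual Erd\H{o}s--Simonovits reduction lets us assume that $G$ is a $K$-almost-regular bipartite graph with parts $X,Y$, minimum degree $\delta\ge Cn^{1-(r+1)/(rs+r+s')}$ and maximum degree at most $K\delta$; we aim for a contradiction assuming $G$ is $H$-free with $H=T_{r,s,s'}^t$. The first task is the analogue of Lemma~\ref{lem:heavy stars}. The key observation is that for any $s'$-tuple $(u_1,\ldots,u_{s'})\in Y^{s'}$, embedding the same $(s+1)$-partite, $(s+1)$-uniform hypergraph $\cH$ as in Lemma~\ref{lem:heavy stars} into the heavy-edges hypergraph restricted to $N(u_1)\cap\cdots\cap N(u_{s'})$ already produces a copy of $T_{r,s,s'}^t$ in $G$---with the $u_l$'s themselves playing the role of the $s'$ root leaves attached to each $y_k$. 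Hence, for every such $s'$-tuple, the number of heavy $(s+1)$-sets lying inside $N(u_1)\cap\cdots\cap N(u_{s'})$ is $o(|N(u_1)\cap\cdots\cap N(u_{s'})|^{s+1})$. Summing over all $s'$-tuples and using convexity together with the $K$-almost regularity of $G$, this codegree-to-the-$s'$ bound can be converted into the required bound on the number of labelled copies of $T_{r,s,s'}$ in $G$ that contain a heavy $(s+1)$-star.

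With this in hand, call a labelled copy of $T_{r,s,s'}$ \emph{nice} if, for each $1\le i\le r$, the $(s+1)$-star at $z_i$ with leaves $y,w_{i,1},\ldots,w_{i,s}$ is light, and extend Definition~\ref{def:locus} by letting $S(x_{1,1},\ldots,x_{r-1,s},a_1,\ldots,a_{s'})$ be the set of vertices $u$ admitting intermediate vertices $v_1,\ldots,v_{r-1}$ as in Definition~\ref{def:locus} and, in addition, satisfying $u\sim a_l$ for every $1\le l\le s'$. Define $(c,k)$-rich copies accordingly. The analogues of Lemmas~\ref{lem:rich with fixed leaves} and~\ref{lem:many rich} then go through with essentially unchanged proofs---$H$-freeness rules out $t$ pairwise vertex-disjoint nice copies sharing the leaves $x_{i,j}$ and $a_l$---yielding at least $cn\delta^{rs+r+s'}$ $(c,r)$-rich labelled copies of $T_{r,s,s'}$ in $G$. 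The endgame then mirrors the proof of Theorem~\ref{thm:regular}: fix leaves $x_{i,j}$ and $a_l$ admitting many $(c,r)$-rich extensions, bound $|X|=|S(x_{i,j},a_l)|$ from below, extract many edges $uv$ with $u\in X$ and $v$ having many neighbours in $X$, and apply Lemma~\ref{lem:dependent random choice}.

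Tracking the exponents, the conditions $q\ge C_0$ and $pq^s\ge C_0|X|^s$ of Lemma~\ref{lem:dependent random choice} reduce to the single inequality
\[ (s^2+s+1)\Bigl(1-\tfrac{r+1}{rs+r+s'}\Bigr)-s^2\ge 0, \]
which expands to $s(r-s)+(s'-1)(s+1)\ge 0$; for integer parameters with $s'\ge 1$ this is equivalent to $r\ge s-s'+1$, exactly the hypothesis of the theorem, while setting $s'=0$ recovers the condition $r\ge s+2$ of Theorem~\ref{thm:main2}. The hardest step is the heavy-stars one, because the $u_l$'s lie on the same side of the bipartition as the subdivision vertices $z_i$ and so cannot be encoded directly as vertices of $\cG$: one has to work with $s'$-tuples of centres and then pass from a codegree-weighted bound back to the standard heavy-star count that feeds the non-nice-copies estimate. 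Everything after that is a routine parameter-tracking adaptation of Sections~2 and~3.
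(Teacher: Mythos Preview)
Your proposal is correct and follows essentially the same route as the paper's own sketch: modify the heavy-stars lemma by working inside common neighbourhoods of $s'$-tuples, extend Definition~\ref{def:locus} so that $u\in S$ must also lie in $N(a_1)\cap\cdots\cap N(a_{s'})$, fix the $s'$ extra leaves alongside the $(r-1)s$ old ones in the claim, and finish with Lemma~\ref{lem:dependent random choice}; your exponent check $(s^2+s+1)(1-\tfrac{r+1}{rs+r+s'})-s^2=s(r-s)+(s'-1)(s+1)\ge 0$ is exactly the numerical verification the paper leaves implicit. The only point the paper states more explicitly is the endgame: Lemma~\ref{lem:dependent random choice} applied to $G[X,Y]$ produces a copy of $F_{r,s}^t$ with the $t$ images of $y$ inside $X\subseteq N(a_1)\cap\cdots\cap N(a_{s'})$, and it is the fixed vertices $a_1,\ldots,a_{s'}$ themselves that then serve as the $s'$ extra root leaves to upgrade this to $T_{r,s,s'}^t$.
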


\noindent \emph{Proof sketch.} Since the proof is very similar to that of Theorem \ref{thm:main2},  we only mention the necessary adjustments. Taking $H=T_{r,s,s'}^t$, Lemma \ref{lem:heavy stars} still holds, although in the proof we need to consider the common neighbourhood of $s'$ vertices rather than that of a single vertex. The auxiliary hypergraphs $\mathcal{G}$ and $\mathcal{H}$ can then be defined identically (except that the vertex set of $\mathcal{G}$ is the common neighbourhood of $s'$ vertices). By making use of the extra $s'$ vertices whose common neighbourhood we considered, the existence of a subgraph $\mathcal{H}$ inside $\mathcal{G}$ still provides a copy of $H$.

The next substantial change is in Definition \ref{def:locus}, where an additional $s'$ vertices are taken as inputs, corresponding to the images of the $s'$ new leaves, and the vertices in $S$ are required to be common neighbours of these $s'$ vertices (on top of the previous requirements). Similarly, for the claim in (the analogue of) Theorem \ref{thm:regular}, we choose and fix the $s'$ new leaves as well as the $(r-1)s$ leaves that were fixed before.

Finally, although Lemma \ref{lem:dependent random choice} does not directly provide a copy of $H=T_{r,s,s'}^t$, we can still use Lemma~\ref{lem:dependent random choice} in the proof of Theorem~\ref{thm:regular} to find a copy of $F_{r,s}^t$ in $G[X,Y]$ with the $t$ copies of $y$ embedded into $X$. But $X$ is the common neighbourhood of $s'$ fixed vertices, so using those vertices we can extend $F_{r,s}^t$ to $H$.

The remaining changes are numerical, so we do not detail them here. \hfill $\Box$



\section*{Acknowledgments} 
We are grateful to the anonymous reviewers for several helpful comments.

\bibliographystyle{amsplain}


\begin{aicauthors}
\begin{authorinfo}[conlon]
  David Conlon\\
  Department of Mathematics\\
  California Institute of Technology\\
  Pasadena, CA 91125, USA\\
  dconlon\imageat{}caltech\imagedot{}edu
\end{authorinfo}
\begin{authorinfo}[janzer]
  Oliver Janzer\\
  Trinity College\\
  University of Cambridge\\
  Cambridge, United Kingdom\\
  oj224\imageat{}cam\imagedot{}ac\imagedot{}uk
\end{authorinfo}
\end{aicauthors}

\end{document}